\title{Why there are no mappings to infinity under the Collatz map (and similar)}
\author{Maarten Wensink \\ maartenwensink@hotmail.com}
\date{\today}
\begin{document}
\theoremstyle{plain}
\newtheorem{theorem}{Theorem}
\newtheorem{corollary}[theorem]{Corollary}
\newtheorem{lemma}[theorem]{Lemma}
\newtheorem{proposition}[theorem]{Proposition}
\theoremstyle{definition}
\newtheorem{definition}[theorem]{Definition}
\newtheorem{example}[theorem]{Example}
\newtheorem{conjecture}[theorem]{Conjecture}
\theoremstyle{remark}
\newtheorem*{remark}{Remark}

\maketitle
\newcommand{\N}{\mathbb{N}}
\newcommand{\0}{\mathbb{N}_0}
\newcommand{\D}{\mathbb{O}}

\tableofcontents

\section{Abstract}
Following up on earlier work, I suggest why there are no mappings to infinity under the Collatz conjecture, nor under other mappings of the generalization $3n+p$, where $p$ is odd.

\section{Introduction}
The Collatz conjecture / 3n+1 problem posits that recursive application of the mapping
\begin{equation}
	C(n):= 	\begin{cases}
					n/2 &\mbox{if n is even}; \\ 
					3n+1 & \mbox{if n is odd}, 
			\end{cases} 
\end{equation}
\newline
on any natural number $n \in \N$, $\N=1,2,3,\ldots$, eventually leads to 1, after which the cycle $\{4,2,1\}$ is repeated indefinitely (Pickover 2009, Lagarias 2010).

In a previous article (Wensink 2018) I set out my reasons to suspect that under a conjugate of the Collatz function, $\N \setminus 1$, $\N=\{1,2,3,\ldots\}$ is partitioned in `strings', subsets of $\N \setminus 1$ that have no cycles, start with an element of $2+3\0$ and end with an element of $3+4\0$, $\0=\{0,1,2,\ldots\}$. Following up on this analysis, I here sketch how the same argument rejects the existence of mappings to infinity (divergences) under the Collatz map. I repeat a part of the earlier analysis for easy reference, taken largely verbatim from Wensink (2018).

\section{The analysis}
The Collatz map is:
\begin{equation}
\label{Collatz}
	C(n):= 	\begin{cases}
					n/2 &\mbox{if n is even}; \\ 
					3n+1 & \mbox{if n is odd},
			\end{cases} 
\end{equation}
\newline
$n \in \N$, $\N=\{1,2,3,\ldots\}$.

Instead we might study the accelerated Collatz map, that sends odd positive integers to odd positive integers:
\begin{equation}
\widetilde{C}(n):=\frac{3n+1}{2^j},
\label{AccelCollatz}
\end{equation}
where $2^j$ is the largest power of 2 that divides $3n+1$, with $n \in \D$, where $\D=\{1,3,5,\ldots\}$, the odd positive integers. $\D$ can be transformed back to $\N$ by the transformation $g: \D \rightarrow \N$ such that 
\begin{equation}
g(n):=\frac{n+1}{2}.
\label{g}
\end{equation}
Thus, the odd positive integers are enumerated: $g(1)=1$, $g(3)=2$, $g(5)=3$, et cetera. For efficient reference and to avoid confusion, I put these enumerated positive integers between brackets: $g(1)=[1]$, $g(3)=[2]$, $g(5)=[3]$, et cetera. When an entire equation is put between brackets, the entire equation refers to the enumerated positive integer space. In addition, I use $x$ to refer to enumerated positive integers, whereas $n$ refers to those positive integers themselves: $g(n)=[x]$. The inverse of $g$ is $g^{-1}: \N \rightarrow \D$ such that
\begin{equation}
g^{-1}([x])=2[x]-1=n.
\label{g-1}
\end{equation}

Conjugating $\widetilde{C}$ through $g$ yields mapping
\begin{equation}
F([x])=g(\widetilde{C}(g^{-1}([x]))),
\end{equation}
which is $F:\N \rightarrow 1+3\0 \cup 3+3\0$, such that
\begin{equation}
\begin{cases}
\label{F}
F(E^n([2+2m]))=[3+3m]|(m,n) \in \0^2, \\
F(E^n([1+4m]))=[1+3m]|(m,n) \in \0^2,
\end{cases}
\end{equation}
with $E: [\N \rightarrow 3+4\0]$ (explanation below) such that
\begin{eqnarray}
\label{E}
E([x]):&=&4[x]-1, \\
\label{E0}
E^0([x]):&=& [x], \\
\label{En}
E^n([x]):&=&E(E^{n-1}([x])).
\end{eqnarray}

\begin{remark}
Notice that the trivial cycle $\{4,2,1\}$ now becomes the trivial loop [1], since 4 and 2 are even and 1 is the first odd natural number.
\end{remark}

It is essential to thoroughly understand $E$. $E$ has the property that
\begin{equation}
F(E^j([x]))=F(E^k([x])) \; \forall (j,k) \in \0^2, \; [x \in \N]
\end{equation}
Thus, $E$ indicates which members of $[\N]$ have the same image as $[x]$ under $\widetilde{C}$ and are therefore `equivalent'; $E$ is not itself the mapping under $\widetilde{C}$. It presents itself as a function, because which elements of $[\N]$ are equivalent to $[x]$ clearly depends on $[x]$. The domain of $E$ is $[\N]$, while its range is $[E(\N)=3+4\0]$. Hence, every $[x \in \N]$ has infinitely many higher equivalents, whereas members of $[3+4\0]$ have at least one lower equivalent. Similarly, $[E(3+4\0)=11+16\0]$ have at least two lower equivalents, and so forth. The phrase ``taking equivalents" means ``applying $E$". Equivalents could be taken of a single $[x]$, forming $[E(x)]$, then $[E(E(x))=E^2(x)]$, and so forth. Equivalents could also be taken of a set. For instance, the set $[2+2\0]$ has equivalents $[E(2+2\0)=7+8\0]$, $[E^2(2+2\0)=E(7+8\0)=27+32\0]$ et cetera, while the set $[1+4\0]$ has equivalents $E[(1+4\0)=3+16\0]$, $[E^2(1+4\0)=E(3+16\0)=11+64\0]$ et cetera.

Indeed, the domain of $F$ as presented in equation (\ref{F}) follows from taking equivalents. It is a partition of $[\N]$ that consists of two collections of equivalent sets. Those that map to $[3+3\0]$ are equivalents of $[2+2\0]$, while those that map to $[1+3\0]$ are equivalents of $[1+4\0]$. The union of all those equivalents of $[2+2\0]$ and $[1+4\0]$, including $[2+2\0]$ and $[1+4\0]$ themselves, is $[\N]$. Thus, $F$ suggests to write $[\N]$ as $[2+2\0 \cup 1+4\0 \cup E(2+2\0) \cup E(1+4\0) \cup E^2(2+2\0) \cup E^2(1+4\0) \cup \ldots = 2+2\0 \cup 1+4\0 \cup 7+8\0 \cup 3+16\0 \cup 27+32\0 \cup 11+64\0 \cup \ldots]$, as this is the way the domain of $F$, which is $[\N]$, presents itself.

We could consider restrictions of $F$ that each pertain to one of these subsets of $[\N]$. This gives $F|_{[2+2\0]}$ being the part of $F$ that maps $[2+2\0]$ (i.e., $[2+2\0]$ is its domain) to $[3+3\0]$ (i.e., $[3+3\0]$ is its range), $F|_{[1+4\0]}$ being the part of $F$ that maps $[1+4\0]$ to $[1+3\0]$, $F|_{E([2+2\0])}=F|_{[7+8\0]}$ being the part of $F$ that maps $[7+8\0]$ to $[3+3\0]$, $F|_{E([1+4\0])}=F|_{[3+16\0]}$ being the part of $F$ that maps $[3+16\0]$ to $[1+3\0]$, et cetera. Conform with the fact that natural numbers are distributed uniformly $(mod \; 2)$, we find that of any and every two consecutive elements of $[\N]$, exactly one is mapped through $F|_{[2+2\0]}$, that of every and any four consecutive elements of $[\N]$, exactly one is mapped through $F|_{[1+4\0]}$, that of every and any eight consecutive elements of $[\N]$, exactly one is mapped through $F|_{[7+8\0]}$, and so forth. The distance in $[\N]$ between any two consecutive elements of $[\N]$ mapped through $F|_{[2+2\0]}$ is $2^1$, the distance in $[\N]$ between any two consecutive elements of $[\N]$ mapped through $F|_{[1+4\0]}$ is $2^2$, while the distance in $[\N]$ between any two consecutive elements of $[\N]$ mapped through $F|_{[7+8\0]}$ is $2^3$, and so forth. These distances, which I call `intervals', are all powers of 2. I use these powers of 2 to refer to the restrictions of $F$ introduced above: $F_1:=F|_{[2+2\0]}$, $F_2:=F|_{[1+4\0]}$, $F_3:=F|_{[7+8\0]}$, and so forth. Just like I use $[x]$ to refer to any element of $[\N]$ without singling out any element of $[\N]$ in particular, so I use $F_z$, $z \in \N$ to refer to any of these restrictions, without specifying which: if I specify $z=1$, I refer to $F_1=F|_{[2+2\0]}$, if I specify $z=2$ I refer to $F_2=F|_{[1+4\0]}$, and so forth. Consecutive elements of $[\N]$ that are mapped through some $F_z$ are found in $[\N]$ at intervals of $2^z$, and of any and every $2^z$ consecutive  elements of $[\N]$, exactly one is mapped through $F_z$. Indeed, this can be generalized to the following definition:
\begin{definition}
\label{Z-prop}
$z$-proportionality: Let the successive restrictions $F_z$ through which some $[x \in \N]$ maps be indexed $i \in 1,\ldots,n$, so that some $[x \in \N]$ is mapped, successively, through $F_{z_1},F_{z_2},\ldots,F_{z_n}$ (in that order). Then this same permutation of mappings through $F_{z_1},F_{z_2},\ldots,F_{z_n}$ occurs at but not before $[x+2^{\sum_{i=1}^n z_i}]$. Indeed, of any and every $2^{\sum_{i=1}^n z_i}$ consecutive elements of $[\N]$, exactly one is mapped through, successively, $F_{z_1}, F_{z_2},\ldots,F_{z_n}$.
\end{definition}

For the purpose of the current paper, it is necessary to take the reverse perspective. Rather than, as above, asking which position a position maps to, we ask by which position(s) a position is mapped to (if any). The reason for this is that if the Collatz conjecture is true, then the Collatz map organizes $\N$ in a graphical tree rooted in [1]. The way to build such a tree would be to start with $[1]$, the root, and then verify which positions map to $[1]$, then which positions map the positions that map to $[1]$, then which positions map to those positions, and so forth. Thus, we need a reverse version of $z$-proportionality, which I call $y$-proportionality and that is developed below. 

Let (recursive) application of $F_l$ (on $[2+3\0]$) be the forward direction, while (recursive) application of $F_l^{-1}$ (on $[3+4\0]$) is referred to as the backward direction. Also, let us give names to the elements of $[3+4\0]$ and $[2+3\0]$. The elements of $[2+3\0]$ are the beginnings of strings in the forward direction (and the end pieces in the backwards direction); let us call these the `tails' of strings. The elements of $[3+4\0]$ are the end pieces of strings in the forward direction (and the starting pieces in the backwards direction); let us call these the `heads' of strings. Such terminology will shorten sentences down the line.

In Wensink (2018), I split up $F$ in its one-to-one part and the part that involves taking equivalents. The one-to-one restriction of $F$ is $F_l: [2+2\0 \cup 1+4\0 \rightarrow 3+3\0 \cup 1+3\0]$, such that
\begin{equation}
\label{Fl}
F_l([2+2m]):=[3+3m]|m \in \0, \; F_l([1+4m]):=[1+3m]|m \in \0,
\end{equation}
where $l$ can be taken to mean ''1 and 2''; it indexes the part of $F$ that refers to positions that do not have a lower equivalent. The reverse of this mapping, which is one-to-one, is $F_l^{-1}: [1+3\0 \cup 3+3\0 \rightarrow 1+4\0 \cup 2+2\0]$, such that
 \begin{equation}
\label{NE}
F_l^{-1}(3m+3)=2m+2|m \in \0, \; F_l^{-1}(3m+1)=4m+1|m \in \0.
\end{equation}

The mapping through $F_l^{-1}$ depends on the residual modulus 3 of a position: a residual of 0 means that a position is mapped through $F_1^{-1}$; a residual of 1 means that a position is mapped through $F_2^{-1}$, while a residual of 2 means that a position is not in the domain of $F_l^{-1}$. Clearly, in $\N$, of every and any three consecutive positions, exactly one has residual 0 (mod 3), exactly one has residual 1 (mod 3), and exactly one has residual 2 (mod 3). This is the proportionality that we are after in reverse direction. Let $y$ denote the residual modulus 3, i.e.
\begin{equation}
\label{y} y(x):= k \in \{ 0,1,2 \} : x-k \equiv 0 \; \text{(mod 3)}.
\end{equation}
Then we can define:
\begin{definition}
$y$-proportionality: of any and every $3$ consecutive elements, exactly one is mapped through $F_1^{-1}$, exactly one is mapped through $F_2^{-1}$ and exactly one is not in the domain of $F_l^{-1}$.
\end{definition}

In the forward direction, we have seen that since $[2 \mapsto 3]$, $[E^{\0}(2)\rightarrow 3]$. In the reverse direction, this would become $[3 \rightarrow E^{\0}(2)]$: a map of one position to infinitely many. Similar goes for all positions. Thus, to have $z$-proportionality in the reverse direction, we need to specify the number of equivalents taken at each point. For instance, in the reverse direction, $[7 \mapsto 9 \mapsto 6]$, the first through $F_2^{-1}$, the second through $F_1^{-1}$. Now at this point we could take, say $[E^5(6)]=5803$ and then continue the application of $F_l^{-1}$: $[F_2^{-1}(5803)=7737]$, $[F_l^{-1}(7737)=5158]$, and so forth. It turns out that the exact same sequence of mappings, i.e. first $F_2^{-1}$, then $F_1^{-1}$, then taking five equivalents, then $F_2^{-1}$ and then $F_1^{-1}$, occurs at $7+3^4\0$, where 4 is the number of applications of $F_l^{-1}$; $y$-proportionality is unaffected by specified patterns of equivalents. To give a more general definition: 
\begin{definition}
\label{y-prop}
$y$-proportionality: In the reverse direction, let the successive restrictions of $F_l^{-1}$ through which some $[x \in \N]$ maps (so either $F_1^{-1}$ or $F_2^{-1}$) be recorded. Let the number of equivalents taken before each application of $F_l^{-1}$ also be recorded. Let this information be stored as $y$ and be indexed $i=1,2,\ldots,n$. Thus, some $[x \in \N]$ is mapped, successively, through $F_{y_1}^{-1},F_{y_2}^{-1},\ldots,F_{y_n}^{-1}$ (in that order). Then this same permutation of mappings through $F_{y_1}^{-1},F_{y_2}^{-1},\ldots,F_{y_n}^{-1}$ occurs at but not before $[x+3^n]$. Indeed, of any and every $3^n$ consecutive elements of $[\N]$, exactly one is mapped through, successively, $F_{y_1}^{-1}, F_{y_2}^{-1},\ldots,F_{y_n}^{-1}$.
\end{definition}

\begin{definition}
\label{YPS}
$y$-proportional subset: a subset of $\N$ that is $y$-proportional as defined in definition \ref{y-prop}.
\end{definition}

As with $z$-proportionality, such $y$-proportionality revolves around the interval between elements of some set. Clearly, if $[x]$ is mapped through $F_1^{-1}$, so are $[x+3]$ and $[x+6]$, or, more generally, $[x+3\0]$ (interval of 3). Similarly, if $[x]$ is mapped through $F_1^{-1}$ and $[F_1^{-1}(x)]$ is mapped through $F_2^{-1}$, then $[x+9\0]$ (interval of 9) is mapped through $F_1^{-1}$ and $F_1^{-1}(x+9\0)$ is mapped through $F_2^{-1}$. In the Appendix I prove the following:
\begin{enumerate}
	\item $[\N]$ is $y$-proportional.
	\item $[3+4\0]$ is $y$-proportional.
	\item $[E^{\0}(x)]$ is $y$-proportional for all $x$.
\end{enumerate}

With these preliminaries set, I now turn to the main matter of this article.

\section{The pigeonhole principle}
In the earlier article I invoked the pigeonhole principle by partitioning $[\N \setminus 1]$ in sections of so-many consecutive elements, of which so-many were hit exactly once. To give a straightforward example, of any and every 3 consecutive elements of $[\N]$, exactly 3 are in $[1+3\0 \cup 2+3\0 \cup 3+3\0]$, so $[1+3\0 \cup 2+3\0 \cup 3+3\0]$ is a partition of $[\N]$. In some cases in Wensink (2018), the application was more complicated than this, but the principle is always the same. The important thing is that the counting be exact: of any and every $\mathcal{N}$ consecutive elements of $[\N]$, exactly $\mathcal{N}$ need to be hit exactly once to prove something a partition of $[\N]$.

We could further develop the pigeonhole principle as used in Wensink (2018) by giving each position its own subset, say $X$. The formulation will then be as follows: of the \emph{first} $X$ elements of $[\N]$, exactly 1 is in subset $X$. For instance, of the first 7 consecutive elements of $[\N]$, exactly one is in $\{[7]\}$; of the first 12 consecutive elements of $[\N]$, exactly one is in $\{[12]\}$; while of the first 33 consecutive elements of $[\N]$, exactly one is in $\{[33]\}$. This is, of course, edging towards the tautological, but there are good reasons for doing so and I make combinations of such subsets below.

\section{Constructing a tree}
As mentioned above, a natural way of constructing the tree for the Collatz conjecture is to start with the root, $[1]$, and work backwards to see which elements of $[\N \setminus 1]$ are included in that tree. Since the subject of the present paper is divergences (mappings to infinity) under the Collatz conjecture, it is possible to take a more elaborate root, say, the first $\mathcal{N}$ elements of $[\N]$, in short $[\mathcal{N}]$. There may be cycles in $[\mathcal{N}]$, but if a tree built on these positions includes all of $[\N \setminus \mathcal{N}]$, there are no divergences under the Collatz conjecture (but $[\mathcal{N}]$ may include cycles).

Working backwards we first take the first higher equivalent of $[\mathcal{N}]$, $[E(\mathcal{N})]$. These are all heads of strings, which means that the entire strings that these positions are heads of are part of the same tree. We include these strings. All elements in all strings have first higher equivalents, so next we include all of those. These again are all heads of strings, which means that those strings are part of the same tree in their entirety, so we include these. And so forth. For instance, take the string $[2,3]$ ([3=E(1)]). Then we would next include the strings of which $[E(2)=7]$ and $[E(3)=11]$ are heads. Since $[7]$ is the head of string $[5,4,6,9,7]$ while $[11]$ is the head of string $[11]$, we include those strings. We next include the strings to which $[E(5)]$, $[E(4)]$, $[E(6)]$,  $[E(9)]$,  $[E(7)]$ and  $[E(11)]$ are heads, and so forth. This process will be referred to as tree building through iterations.

If we pursue tree building through iterations starting with $[\mathcal{N}]$, many of the strings included will contain elements of $[\mathcal{N}]$. This fact gives rise to a risk of double counting (if we count), so we disregard $[\mathcal{N}]$ and now consider all strings that have an element of $[E(\mathcal{N})]$ as head. Next, we take the first higher equivalents of all elements in all those strings and include the strings these are heads of. Since these newly included position are all different from the positions that were already included, we now simply continue this process.

It is now necessary to relate this tree building process to the pigeonhole principle discussed above. When $[\mathcal{N}]$ is taken as a root, within $[\mathcal{N}]$ there is parity between the number of pigeons and the number of pigeonholes: there are exactly $\mathcal{N}$ pigeons in $[\mathcal{N}]$, and exactly $\mathcal{N}$ pigeonholes. It would be simple if now every element of $[3+4\0]$ were the head of a string that exactly included the elements between that head and the head 4 positions lower, for instance if [443] were the head of string $[440,441,442,443]$ ([439] being the head 4 positions lower). This way, all the pigeonholes would remain filled every time we expand the tree. Taking equivalents of $[\mathcal{N}]$ would then mean that the pigeonholes that we consider increased from $[\mathcal{N}]$ to $[4\mathcal{N}]$, but all these pigeonholes would be nicely filled: of the first $4\mathcal{N}$ elements of $[\N]$, exactly $4\mathcal{N}$ would be included in the tree. Repeating such a process indefinitely would then cover $[\N]$ and the proof would be complete.

But the case at hand is not so straightforward. The fact that taking equivalents implies a multiplication by 4 (since $[E(x)=4x-1]$, -1 becoming negligible as $x \rightarrow \infty$), while the strings thusly included consist on the average of 3 elements (the geometric series that follows immediately from $y$-proportionality) presents a potential problem: it suggests that the number of pigeonholes could multiply by 4, but the number of pigeons would only multiply by 3 every time we expand the tree, potentially leaving many (indeed increasingly many) pigeonholes unfilled.

It is important to find the number of pigeonholes a pigeon is restricted to. Three pigeons in three pigeonholes means that all pigeonholes are covered, whereas three pigeons in more than three pigeonholes means that they are not. Hence the reference above to the construct already alluded to above: including position $[x]$ implies that in the first $x$ elements of $[\N]$, exactly one is hit. When growing the tree, we should keep track of both the number of pigeons and the range of $[\N]$ that these pigeons are restricted to, as this determines the number of pigeonholes.

For every element in an included string, in the next iteration an entire string will be included. On the average, such a string consists of three elements: exponential growth (in a fashion) of the number of pigeons. Such a string, however, starts with a head that is asymptotically four times as large as the element it is the first higher equivalent to. This too implies exponential growth (in a sense).  Thus, each iteration of tree building pigeons and pigeonholes grow on average by a factor 3, respectively 4. So the pigeons seem to be losing. Or are they?

It is certain that the moment we take the first higher equivalent of $[\mathcal{N}]$ and include their strings, on average the number of pigeons triples (since the application of $F_l^{-1}$ depends on $y$, or the residual modulus 3). Indeed, as $\mathcal{N} \rightarrow \infty$, this becomes exactly true. But is the number of pigeonholes really multiplied by 4? Somehow, recursive application of $F_l^{-1}$ on $[3+4\0]$ managed to cover all of $[\N \setminus 1]$, even though there are three pigeonholes to be filled between consecutive elements of $[3+4\0]$ while elements of $[3+4\0]$ create two pigeons on the average ($[3+4\0]$ is y-proportional). Hence, I now look into the way elements included in strings are dispersed over $[\N \setminus 1]$.

\section{String dispersion}
There are two mechanisms through which recursive application of $F_l^{-1}$ on $[3+4\0]$ does fill the intervals between $[3+4\0]$ in $[\N \setminus 1]$, even though we suspect it may not. The first is that the consecutive application of an equal number of mappings through $F_1^{-1}$ and $F_2^{-1}$ yields a lower position. $[F_1^{-1}(x)=2x/3]$, while $[F_2^{-1}(x)=4x/3\rfloor]$; consecutive application of $F_1^{-1}$ and $F_2^{-1}$ thus yields approximately (and asymptotically) $[8x/9<x]$, so that a (small) majority of the maps produced by recursive application of $F_l^{-1}$ on $[3+4\0]$ lies below the element of $[3+4\0]$ that the map originates from. This could be called the non-ergodic nature of the iterative mapping.

The second mechanism is through one-to-one correspondence in the sense of Cantor: the same way in which $[3+4\0]$ has the same cardinality as $[\N]$ because they can be put in one-to-one correspondence through $E$, so the images of $[3+4\0]$ can fill $[\N \setminus 1 \setminus 3+4\0]$. This works as follows. Suppose that every element of $[3+4\0]$ produced exactly two images: one that is two thirds as large as the original (as if mapped through $F_1^{-1}$), and one that is four thirds as large as the original (as if mapped through $F_2^{-1}$). Then the interval between consecutive images of the first kind would be $2/3$ times as large as the interval between elements of the original. Thus, while of every and any four consecutive elements of $[\N \setminus 1]$, exactly one is in $[3+4\0]$, we find that of every and any eight consecutive elements, exactly three would be in the image of $[3+4\0]$, since $3/2\cdot1/4=3/8$. The interval between images of the second kind would be $4/3$ times as large as the interval between consecutive originals: of every and any sixteen consecutive elements of $[\N \setminus 1]$, exactly three would be in this image of $[3+4\0]$. Taken together, we would then find that of every and any sixteen consecutive elements of $[\N \setminus 1]$, exactly nine are in the image of $[3+4\0]$. This is already more than we would expect if we merely counted the number of pigeons that originate from the members of $[3+4\0]$ in 16 consecutive elements of $[\N \setminus 1]$, which would be $4\cdot2=8$. Through the described dispersion, of every and any sixteen consecutive elements of $[\N \setminus 1]$, exactly nine would be in its image, more than double the number of elements that are in $[3+4\0]$, even though every element of $[3+4\0]$ creates only two images.

There is no limit to increase in the density of the image through such dispersion. If every element of $[3+4\0]$ created two images, one that is half the original, and one that is one-and-a-half times the original, each original would still have two images, which on average are as large as the original, and yet we would find that of every and any two consecutive elements of $[\N \setminus 1]$, exactly 1 is in the first image, while of any and every six consecutive elements of $[\N \setminus 1]$, exactly 1 is in the second image. This would gives eight images in every and any twelve consecutive elements of $[\N \setminus 1]$. This while twelve consecutive elements of $[\N \setminus 1]$ contain only three elements of $[3+4\0]$, which generate only six images. With even stronger dispersion, even more of $[\N \setminus 1]$ would be filled. Clearly, the dispersion cannot cover more than all of $[\N \setminus 1 \ 3+4\0]$, as images are unique. For instance, image of one third the original and one image five thirds of the original (average $(1/3+5/3)/2=1$) is not an option, since images of the first kind alone would already fill $[\N \setminus 1 \setminus 3+4\0]$.

The dispersion under $F_l^{-1}$ is more complex than the simple examples above, because some elements of $[3+4\0]$ create no image under recursive application of $F_l^{-1}$, while others create many. And yet we do know the result: of any and every four consecutive elements of $[\N \setminus 1]$, exactly three are in the image of $[3+4\0]$ under recursive application of $F_l^{-1}$. This suggests that when following the tree-building algorithm above, the parity between pigeons and pigeonholes may be maintained after all. In fact, we already know that it does because $[E^{\0}(x)]$ has the same dispersion properties as $[3+4\0]$ (both are y-proportional). This means that there are no divergences under the Collatz mapping (and neither under any similar mapping when $3n+1$ is generalized to $3n+p$ where $p$ is either in $5+6\0$ or $1+6\0$ (see Wensink previous paper). Yet we can look into it in more detail.

\section{Pigeon-pigeonhole parity}
\begin{lemma}
Take root $[\mathcal{N}]$, the first $\mathcal{N}=3^k, k=1,2,3,\ldots$ elements of $[\N]$. As $k \rightarrow \infty$, adding the strings of which $[E(\mathcal{N})]$ are heads maintains a ratio 1 between pigeons and pigeonholes.
\end{lemma}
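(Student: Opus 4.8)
The plan is to count pigeons and pigeonholes separately and to show that, for $\mathcal{N}=3^k$, both are asymptotic to $3\mathcal{N}=3^{k+1}$ as $k\to\infty$, so that their ratio tends to $1$. Taking $\mathcal{N}$ a power of $3$ is the device that makes this work: $[E(\mathcal{N})]=\{4x-1:1\le x\le 3^k\}$ is then exactly the first $3^k$ heads, i.e. all of $[3+4\0]$ up to position $4\cdot3^k-1$, and $3^k$ heads realise every $y$-pattern of bounded length with its exact frequency. This is what lets me export the $y$-proportionality of $[3+4\0]$ (the Appendix's second fact) to the finite set $[E(\mathcal{N})]$ with only a boundary error that I will then have to argue is negligible.

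First I would count the pigeons. The strings are disjoint and each head of $[E(\mathcal{N})]$ heads exactly one of them, so the pigeon count $P_k$ is the sum of the lengths of these $3^k$ strings. Read backwards from a head, a string continues under $F_l^{-1}$ precisely when the current position is $\not\equiv 2\pmod 3$ and terminates at a tail when it is $\equiv 2\pmod 3$; by $y$-proportionality (Definition \ref{y-prop}) these two events carry proportions $\tfrac23$ and $\tfrac13$. Hence the string length obeys a geometric law with mean
\[
\sum_{t\ge 0}(t+1)\Big(\tfrac23\Big)^{t}\tfrac13=3,
\]
and, because the first $3^k$ heads contain every length-bounded $y$-pattern in its exact proportion, the empirical mean length converges to $3$. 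Therefore $P_k=3^{k+1}(1+o(1))$: the pigeons triple, exactly as the informal discussion predicted.

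The harder half is the pigeonhole count $Q_k$. The naive fear is $Q_k\approx 4\cdot3^k$, because $[E(x)=4x-1]$ carries the largest head out to $4\cdot3^k-1$, so the rule ``including $[x]$ occupies one of the first $x$ holes'' seems to open about $4\cdot3^k$ holes against only $3\cdot3^k$ incoming pigeons. The claim is that this loss is illusory and is repaired by the string dispersion of the previous section: because $[E(\mathcal{N})]$ is $y$-proportional in exactly the way $[3+4\0]$ is, its strings disperse over $[\N]$ with the same density law, so the genuinely available holes grow by a factor $3$ rather than $4$. I would make this quantitative by resolving each string into its descending steps through $F_1^{-1}$ (which contract intervals by $\tfrac23$) and its ascending steps through $F_2^{-1}$ (which dilate them by $\tfrac43$), summing the resulting image densities against the $\tfrac14$ density of the heads as in the worked dispersion examples, and checking that the effective hole count is $Q_k=3^{k+1}(1+o(1))$. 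Dividing then gives $P_k/Q_k\to 1$.

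I expect the pigeonhole step to be the main obstacle, for two linked reasons. First, the truncation at $4\cdot3^k$ is not clean: some positions below it have heads lying far above it, so they are momentarily uncounted, while some strings headed below it overshoot it; I would have to show that, measured against $3^{k+1}$, both boundary populations wash out in the limit, which is precisely why the power-of-three choice and the $y$-proportionality of $[E(\mathcal{N})]$ are indispensable. Second, and more fundamentally, the dispersion statement is asserted for the whole infinite set $[3+4\0]$, where ``three of every four positions lie in the image'' is almost a tautology about non-heads; converting it into an \emph{exact finite count} for the truncated, $y$-proportional head-set $[E(\mathcal{N})]$ is the delicate point on which the lemma really turns.
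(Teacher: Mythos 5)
Your pigeon count is sound and agrees with the paper's: splitting the included positions by how many inverse steps lie between them and their head gives $\mathcal{N}(1+\tfrac{2}{3}+\tfrac{4}{9}+\cdots)=3\mathcal{N}$, which is the same number your mean-string-length-$3$ computation produces. The genuine gap is exactly where you flagged it: you never produce the pigeonhole count, you only promise that an ``effective hole count'' $Q_k=3^{k+1}(1+o(1))$ can be extracted from the dispersion examples, and that promise cannot be kept in the form you state it. The included positions are not confined to the first $3\mathcal{N}$ positions of $[\N]$: the heads alone reach up to $4\mathcal{N}-1$ (since $[E(x)=4x-1]$), and the rare strings with many ascending $F_2^{-1}$ steps reach positions of order $4\mathcal{N}(4/3)^b$ for unboundedly large $b$. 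So there is no truncation point $Q_k\approx 3\mathcal{N}$ with all pigeons inside the first $Q_k$ holes, and the single aggregate ratio $P_k/Q_k$ you propose to control is the wrong functional: read literally it does not tend to $1$.

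The paper's proof resolves this precisely by never aggregating the holes into one number. It partitions the pigeons into classes indexed by the exact pattern of inverse maps applied (no map; $F_1^{-1}$; $F_2^{-1}$; $F_1^{-1}$ twice; $F_1^{-1}$ then $F_2^{-1}$; and so on). By $y$-proportionality a class with $a$ applications of $F_1^{-1}$ and $b$ of $F_2^{-1}$ contains $\mathcal{N}/3^{a+b}$ pigeons, and those pigeons are confined to the first $4\mathcal{N}(2/3)^a(4/3)^b$ positions, so its \emph{class-wise} pigeon-to-hole ratio is $2^{-(a+2b+2)}$, independent of the order of the steps. Grouping classes by the weight $n=a+2b$, the number of classes of weight $n$ is a Fibonacci number, and the class ratios sum to
\begin{equation*}
\frac{1}{4}+\frac{1}{8}+\frac{2}{16}+\frac{3}{32}+\frac{5}{64}+\cdots=1.
\end{equation*}
The lemma's ``ratio $1$'' means this \emph{sum of ratios} equals $1$; its legitimacy rests on the paper's stated rationale that $a$ pigeons restricted to the first $b$ positions are exactly as effective at covering $[\N]$ as $ca$ pigeons restricted to the first $cb$ positions, so that coverage fractions may be added across classes even though their hole ranges overlap (they all start at position $1$). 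This class decomposition plus additivity-of-ratios is the missing idea in your proposal; without it, converting the infinite-set dispersion statement into a finite count --- your own ``delicate point'' --- is not merely delicate but ill-posed.
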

\begin{proof}
In $[\mathcal{N}]$ there is parity between the number of pigeons and the number of pigeonholes: there are $\mathcal{N}$ pigeons in $\mathcal{N}$ pigeonholes. The strings that end on $[E(\mathcal{N})]$ map to $[\mathcal{N}]$. As $k \rightarrow \infty$, application of $F_l^{-1}$ on the first $\mathcal{N}$ elements of $[3+4\0]$ will have the following result.

All the strings will have a head, i.e. an element of $[3+4\0]$, which is asymptotically 4 times as large as the element they are the equivalent of, because $[E(x)=4x-1]$. This means that for each element of $[\mathcal{N}]$, we have exactly one corresponding element in the first $4\mathcal{N}$ elements of $[\N]$, or simply $[4\mathcal{N}]$. Hence, we have $\mathcal{N}$ pigeons in $4\mathcal{N}$ pigeonholes.

Of each three consecutive elements of $[E(\mathcal{N})]$, exactly one is mapped through $F_1^{-1}$, which implies multiplication by 2/3. Thus, for each 3 elements in $[\mathcal{N}]$, we have exactly one corresponding element in the first $(2/3)\cdot4\mathcal{N}=8/3\mathcal{N}$ elements of $[\N]$, or simply $[8/3\mathcal{N}]$. Thus, we have $\mathcal{N}/3$ pigeons in $8\mathcal{N}/3$ pigeonholes.

Of each three consecutive elements of $[E(\mathcal{N})]$, exactly one is mapped through $F_2^{-1}$, which implies multiplication by 4/3. This means that for each three consecutive elements in $[\mathcal{N}]$, we have exactly one corresponding element in the first $(4/3)\cdot4\mathcal{N}=16/3\mathcal{N}$ elements of $[\N]$, or $[16/3\mathcal{N}]$. We thus have $\mathcal{N}/3$ pigeons in $16\mathcal{N}/3$ pigeonholes.

Of each nine consecutive elements of $[E(\mathcal{N})]$, exactly one is mapped through $F_1^{-1}$ twice, which implies multiplication by $(2/3)^2$. This means that for each 9 consecutive elements in $[\mathcal{N}]$ we have exactly one corresponding element in the first $(2/3)^2\cdot4\mathcal{N}=16/9\mathcal{N}$ elements of $[\N]$, or $[16/9\mathcal{N}]$. We thus have $\mathcal{N}/3$ pigeons in $16\mathcal{N}/3$ pigeonholes.

Of each nine consecutive elements of $[E(\mathcal{N})]$, exactly one is mapped through $F_2^{-1}$ twice, which implies multiplication by $(4/3)^2$. This means that for each 9 consecutive elements in $[\mathcal{N}]$ we have exactly one corresponding element in the first $(4/3)^2\cdot4\mathcal{N}=64/9\mathcal{N}$ elements of $[\N]$, or $[64/9\mathcal{N}]$. We thus have $\mathcal{N}/3$ pigeons in $64\mathcal{N}/3$ pigeonholes.

Of each nine consecutive elements of $[E(\mathcal{N})]$, exactly one is mapped first through $F_2^{-1}$ and then through $F_1^{-1}$. Also, of each nine consecutive elements of $[E(\mathcal{N})]$, exactly one is mapped first through $F_1^{-1}$ and then through $F_2^{-1}$. Calculations like those above show that each of these gives rise to $\mathcal{N}/3$ pigeons in $32\mathcal{N}/3$ pigeonholes, so we have $2\mathcal{N}/3$ pigeons in $32\mathcal{N}/3$ pigeonholes.

And so on. The series that forms is composed of, delightfully, the Fibonacci sequence divided by $2^{n+1}$:
\begin{equation}
\label{Parity}
\frac{1\mathcal{N}}{4\mathcal{N}}+\frac{1\mathcal{N}}{8\mathcal{N}}+\frac{2\mathcal{N}}{16\mathcal{N}}+\frac{3\mathcal{N}}{32\mathcal{N}}+\frac{5\mathcal{N}}{64\mathcal{N}}+\ldots=1.
\end{equation}
Hence, the ratio between pigeons and pigeonholes remains 1.
\end{proof}

Expression (\ref{Parity}) is the sum of an infinite number of fractions that have the number of pigeons as numerator and the number of pigeonholes as denominator. The number of pigeons grows in proportion to its current number: each iteration, each pigeon adds a string, which on average consists of three pigeons. Thus, assuming the average (and this average applies as $\mathcal{N} \rightarrow \infty$), for each present pigeon three new pigeons will be added in the next iteration. The same is true for pigeonholes: the range within which a pigeon is known to be, which determines the number of pigeonholes, also grows in proportion to its current size. If we know a pigeon to be restricted to, e.g., the first 2 elements of $[\N]$, its first higher equivalent will be restricted to the first 8 elements of $[\N]$, its second higher equivalent to the first 32 elements of $[\N]$, and so forth. On the other hand, knowing that a pigeon is restricted to, say, the first 60 elements of $[\N]$, its first higher equivalent will only be restricted to the first 240 elements of $[\N]$, and so forth. This, then, is the rationale for searching for pigeon-pigeonhole parity: To gauge a tree's ability to cover $[\N]$, we need to keep track of both the number of pigeons and the range where these can be found, which determines the number of pigeonholes. Importantly, $a$ pigeons in the first $b>a$ elements of $[\N]$ is equally effective as $c\cdot a$ pigeons in the first $c\cdot b$ elements of $[\N]$ in covering $[\N]$ through growing a tree. Hence the sum of ratios in expression (\ref{Parity}).

A second important property of the ratio found in expression (\ref{Parity}) is that many of the pigeons that are counted in the numerators are among $[\mathcal{N}]$. Thus, the relevance of this number is not that we have found $3\mathcal{N}$ \emph{new} pigeons, but that we have managed to let the tree grow while maintaining coverage of pigeonholes. To give an easy example, if $\mathcal{N}=27$, this includes both $[2]$ and $[E(2)=7]$, so $[7]$ in the next iteration would be counted double if we keep the positions included before the iteration.

\section{No divergences (mappings to infinity)}
How could Lemma 1 be used in a proof of no divergence? I suggest the following. The number of pigeons after the first iteration is $\mathcal{N}+2/3\mathcal{N}+4/9\mathcal{N}+\ldots=3\mathcal{N}$ if we let $k \rightarrow \infty$. These $3\mathcal{N}$ pigeons are contained in various subsets of the first so-many elements of $[\N]$, some of which are more restrictive than the first $3\mathcal{N}$ elements of $[\N]$, for instance when they restrict the pigeons in $[3\cdot2\cdot2\mathcal{N}/9]$, while others are less restrictive, for instance when they restrict the pigeons in $[3\cdot4\cdot4\mathcal{N}/9]$. Yet the mass effect is that the ratio of pigeons to pigeonholes in these sets combines to 1. Effectively, we now have $3\mathcal{N}$ pigeons in $3\mathcal{N}$ pigeonholes.

Notice that in the previous section we have not used the information that many of the pigeons are restricted to an even smaller range of pigeon holes than was stated. For instance, it was stated that for each element of $[\mathcal{N}]$ there is exactly one corresponding string head in the first $4\mathcal{N}$ elements of $[\N]$. In reality, of course, the first head is restricted to the first 4 elements of $[\mathbb{N}]$, the second head is restricted to the first 8 elements of $[\mathbb{N}]$, and so on. Not using this information was deliberate, for the following reason.

$[\mathcal{N}]$, $\mathcal{N}=3^k$, is $y$-proportional up to $k$ recursive applications of $F_l^{-1}$: what happens after that, we cannot say, at least not with any regularity and predictability. Since the next step in the tree-building process is to take the first higher equivalent of all the $3\mathcal{N}$ pigeons and include their strings, what can we say about $y$-proportionality of these $3\mathcal{N}$ pigeons? Although the y-proportionality no longer exists because we cannot guarantee the order in which these permutations occur, all possible permutations up to up to $k+1$ now occur. This is guaranteed by the fact that the way these pigeons have been reached is through the complete set of permutations up to $k$. Thus, we have what we might call messy $y$-proportionality: every permutation of $k+1$ mappings through $F_l^{-1}$ occurs exactly once. Because of this messy $y$-proportionality, we cannot use more detailed information as suggested above.

Putting these things together we see that as the tree grows, it does not grow less dense and therefore cannot skip parts of $[\mathbb{N}]$. If we built a tree on a root where the pigeon-pigeonhole ratio is 1:60, and that is somehow $y$-proportional, the algorithm assures that this 1:60 pigeon-pigeonhole ratio is maintained as the tree expands. Like its root, this tree will have major holes in it (59 out of 60 pigeonholes will be empty). Yet if we start with a root where all pigeonholes are filled, we can build a tree on it where all pigeonholes are filled; it is just that the tree consists of many different subsets that add up to pigeon-pigeonhole parity.

Take root $[\mathcal{N}]$, $\mathcal{N}=3^k, k=1,2,3,\ldots, k \rightarrow \infty$ and then, as above, take $E(\mathcal{N})$ and include the strings that belong to these heads. Pigeon-pigeonhole parity has been maintained (Lemma 1) and we now have a messy $y$-proportional set of $3\mathcal{N}$ pigeons. Hence, perform the next iteration, to find a messy $y$-proportional subset of $9\mathcal{N}$ in $9\mathcal{N}$ pigeonholes. As we progress through this algorithm, the following happens. The first iteration, $[1]$ is no longer reached, because it is not in a string. The second iteration, $[2,3]$ is no longer reached, because $[3=E(1)]$. The third iteration, $[11]$ and $[5,4,6,9,7]$ are no longer reached, because $[7=E(2)]$ and $[11=E(3)]$, and so on. Yet we move through $[\N]$ while pigeon-pigeonhole parity is maintained so that after any number of iterations the resulting set of positions is always sufficient to build tree on in which pigeon-pigeonhole parity is, again, maintained. Hence, $[\N \setminus 1 \setminus \mathcal{N}] \mapsto [\mathcal{N}]$. Incidentally, this also means that all potential cycles occur in $[\mathcal{N}]$.

\section{Discussion}
While I realize that this paper is a bit wordy (a better mathematician may be able to formalize the argument more rigorously), I do think that it contains all the basic ingredients necessary for a formal proof. The key point is to demonstrate that the string dispersion behavior gives rise to a tree-growing process that does not leave holes.

As a further point of insight, apparent divergences, i.e. mappings to higher and higher positions for a large number of iterations, are not counter to the conjecture. Rather, these are necessary for the conjecture to be true at all. To see why, consider that in reverse tree building we take equivalents, which multiplies the range that determines the number of pigeonholes by 4, and then apply $F_l^{-1}$, which multiplies the number of pigeons by 3. The only way in which all elements of $[\mathbb{N}]$ can be included in this tree is if at a (much) later iteration, the tree building process reaches into deep pockets of yet unincluded elements of $[\mathbb{N}]$. Looking in the forward direction, which we normally do, this means that mappings are to higher and higher \emph{numbers} for a (very) large number of iterations. Indeed, there is no limit to the number of iterations that map to higher numbers.

\end{document}